\documentclass{amsart}
\usepackage{amsfonts,amsmath,amsthm,color, amssymb}
\usepackage[all]{xy}

\frenchspacing
\addtolength{\textwidth}{1cm}
\addtolength{\textheight}{1cm}
\addtolength{\voffset}{-0.5cm}
\addtolength{\hoffset}{-0.5cm}

\newtheorem{theorem}{Theorem}[section]
\newtheorem{lemma}[theorem]{Lemma}
\newtheorem{proposition}[theorem]{Proposition}
\newtheorem{corollary}[theorem]{Corollary}

\theoremstyle{definition}

\newtheorem{example}[theorem]{Example}

\theoremstyle{remark}
\newtheorem{remark}[theorem]{Remark}

\numberwithin{equation}{section}

\newcommand{\Set}{\mathbf{Set}}
\newcommand{\Art}{\mathbf{Art}}

\newcommand{\Oh}{\mathcal{O}}

\newcommand{\Def}{\operatorname{Def}}

\newcommand{\Spec}{\operatorname{Spec}}

\newcommand{\C}{\mathbb{C}}

\newcommand{\N}{\mathbb{N}}
\newcommand{\R}{\mathbb{R}}
\newcommand{\Z}{\mathbb{Z}}

\newcommand{\K}{\mathbb{K}\,}

\begin{document}

\title{Diffeomorphism Classes of Calabi-Yau varieties}

\author{Gilberto Bini}
\address{\newline
Universit\`a degli Studi di Milano,\hfill\newline
Dipartimento di Matematica \lq\lq F.Enriques\rq\rq,\hfill\newline
Via C. Saldini 50,
20133 Milano, Italy.}
\email{gilberto.bini@unimi.it }

\author{Donatella Iacono}
\address{\newline  Universit\`a degli Studi di Bari,
\newline Dipartimento di Matematica,
\hfill\newline Via E. Orabona 4,
70125 Bari, Italy.}
 \email{donatella.iacono@uniba.it }

\begin{abstract}
In this article we investigate diffeomorphism classes of Calabi-Yau threefolds. In particular, we focus on those embedded in toric Fano manifolds. Along the way, we give various examples and conclude with a curious remark regarding mirror symmetry.
\end{abstract}

\maketitle

\section{Introduction}

A longstanding problem in geometry is the  classification of geometric objects
up to isomorphism. For example, from a topological point of view, we are interested in classifying objects up to homeomorphism.
In  Differential Geometry, the classification is
up to diffeomorphism and in complex geometry,   we look for a classification up
to (analytic) isomorphism.

This is the starting point for the construction of the moduli space.  The main goal is the
classification of families of these geometric objects (up to equivalence) so that
the classifying space, the so called {\em moduli space}, is a reasonable geometric space. Roughly speaking,
the moduli space is a parameter space that classifies these objects, in the sense that its points
 parametrise the geometric objects that we are considering.
One of the easiest examples is the collection of all the lines (through the origin) in three dimensional
space. The space that classifies this collection is well known and has a nice geometric
structure: it is the projective plane (a smooth and compact manifold). As another example, we
can consider the space that classifies, up to isomorphism, smooth rational curves of genus
zero with 3 distinct marked points. It turns out that this space is just a point, since any triple
of distinct points on a projective line can be sent in a distinct triple by an automorphism.

Unfortunately, the  general  situation turns out to be very complicated.
In complex dimension one,  we would like to classify all smooth curves, i.e., Riemann surfaces up to isomorphism.
The classification can be carried out by using the genus $g$ of the curve. For $g \geq 1$ the moduli space $\mathcal{M}_g$
is well understood and has a rich geometric structure. We also observe that in this case all the objects are projective, i.e., all smooth curves embedded in projective space.

In dimension two, the classification of compact complex surfaces is more involved than that in dimension one.
It turns out that it is convenient  to  classify birational classes of surfaces. Then,  for
every birational class there is  a unique minimal  model, that has to be classified.

In dimension higher or equal than three, the classification is quite far from being complete. The  idea is to
generalize the technique used for dimension two and this has developed the so called
Minimal Model Program.  This classification is not concluded yet and
already in dimension three there are many technical issues that have to be understood such as the uniqueness of the minimal model.

Motivated by a better  understanding of this classification, we are interested in the role
played by Calabi-Yau manifolds. First of all, the classification of 3-dimensional algebraic varieties has still some gaps
due to the lack of understanding of Calabi-Yau threefolds. Moreover, the moduli space of Calabi-Yau varieties has received attention by theoretical physicists, since these geometric objects are important
for mirror symmetry, cohomological quantum field theory and
string theory, branches of physics dealing with general relativity and quantum mechanics.

In dimension one, Calabi-Yau curves are genus $1$ curves and they are all homeomorphic each other.
These are not isomorphic and they  are classified by the so called $j$-invariant.
In dimension two, Calabi-Yau surfaces are called K3 surfaces and they are all homeomorphic each other. Also in this case they are not all isomorphic; moreover, there exist  K3 surfaces that are not projective. We also remark that K3 surfaces are extensively studied and they  play a central role in algebraic and complex geometry.

In higher dimension, the classification of  Calabi-Yau manifolds is quite hard and many questions are still open also in the topological
setting. For example, the topology of Calabi-Yau manifolds is not uniquely determined for dimension greater or equal than three. It is also not
known if there   are only finitely many topological  types of Calabi-Yau threefolds.

From the differential point of view, C.T.C. Wall described the invariants that determine the diffeomorphism  type of
closed, smooth, simply connected 6-manifolds with torsion free cohomology \cite{wall}. In particular,
the Hodge data, the triple intersection in cohomology and the second Chern class completely determine the diffeomorphism type of a simply
connected Calabi-Yau threefold.
 Recently, A. Kanazawa, P. M. H. Wilson \cite{Kanazawa}, refined the theorem  by  Wall, providing some inequalities on the
 invariants, which hold in the case of Calabi-Yau threefolds.

The setting is very complicated. An interesting task is to find new examples of Calabi-Yau threefolds.
The best known example of Calabi-Yau threefolds is the smooth quintic hypersurface in projective space $\mathbb{P}^4$, which is defined by a homogeneous polynomial of degree five. Actually, this example can be generalized to construct the majority of all known Calabi-Yau varieties. Indeed, projective space $\mathbb{P}^4$ is a particular example of smooth toric Fano varieties
 and these manifolds play a fundamental role in the construction of examples of Calabi-Yau. Once we have a toric Fano manifold, there always  exists  a submanifold of codimension one that is a Calabi-Yau manifold
(see Section \ref{sezio toric fano}).

The toric set-up is an algebraic property that can be analyzed in terms of combinatorial algebra. Indeed,
the  classification of smooth toric Fano varieties of dimension $n$ up to isomorphism turns out to be equivalent to the
classification of combinatorial objects, namely some special polytopes in $\R^n$ up to linear unimodular transformation.

In \cite{baty2}, V. Batyrev describes a combinatorial criterion in terms of reflexive polyhedra for a hypersurface
in a toric variety to be  Calabi-Yau. He also investigates mirror symmetry in terms of an exchange of a dual pair of reflexive lattice polytopes.
Moreover, he also provides the complete biregular classification of all $4$-dimensional smooth toric Fano varieties:
there are exactly $123$ different types \cite{Baty}.

Using a computer program, M. Kreuzer and H. Sharke  are able to describe all the reflexive polyhedra
that exist in dimension four. They are about $500,000,000$  \cite{kshark}. In particular, they find more than 30,000 topological distinct
Calabi-Yau threefolds with  distinct pairs  of the Hodge numbers $(a,b)$, where $ h^{1,1}(X)=a$ and $ h^{1,2}(X)=b$
(see Section \ref{sezio calbiyau}).
Furthermore, in \cite{batykreu}  the authors find $210$ reflexive $4$-polytopes defining 68 topologically different Calabi-Yau
 varieties of dimension 3 with the Hodge number $a= 1$.

Therefore, it is  interesting to investigate the set-up of toric Fano manifolds and try to answer some questions that naturally arise.
For example, if the Hodge numbers $(a,b)$ of two Calabi-Yau manifolds  $X_1$ and $X_2$ are different,
then they are not homeomorphic. It is interesting to understand the converse.
If $X_1$ and $X_2$ have the same Hodge numbers, we wonder if they are homeomorphic or even diffeomorphic or isomorphic.

First of all, we deal with Calabi-Yau manifolds $X_1$ and $X_2$  contained in the same  toric Fano manifold.
In this specific context, we are able to prove that if $X_1$ and $X_2$ are deformation equivalent
as abstract manifolds, then they are deformation equivalent as embedded manifolds.

Then, we review the Theorems by C.T.C. Wall (Theorem \ref{theorem wall})  and by A. Kanazawa, P. M. H. Wilson  (Theorem \ref{theorem kanaza}), and we
investigate some examples of simply connected Calabi-Yau manifolds with Hodge number $a=1$.

From the point of view of moduli spaces, it is an interesting problem to understand the behaviour of the moduli space of Calabi-Yau manifolds.
In dimension two, the moduli space of K3 surfaces is an irreducible $20$ dimensional space and many properties are known.
For Calabi-Yau threefolds, it is not known whether the moduli space is irreducible or not:
M. Reid's conjecture predicts that this space should not behave too bad \cite{reid}.

Then, instead of studying the moduli space of all Calabi-Yau threefolds,  M.-C. Chang and H.I. Kim  propose to
investigate the space $M_{m,c}$ \cite{chank kim} that classifies
Calabi-Yau threefolds with fixed invariants $m$ and $c$, which are related to the invariant used by Wall (see Section \ref{sezio mirror}).
In this context, we describe an example of Calabi-Yau threefold and its mirror lying in the same $M_{5,50}$.
In particular, we provide an example of two Calabi-Yau threefolds lying in $M_{5,50}$
that are neither diffeomorphic nor deformation equivalent.

With the aim  of providing an introduction to the subject,  Section 2 is devoted to recalling some preliminaries on Calabi-Yau manifolds and toric Fano manifolds.
In Section 3, we compare the  embedded deformations of a Calabi-Yau manifold in a toric Fano 
manifold with the abstract ones. Section 4 recalls Wall's Theorem on the invariants that determine the diffeomorphism  type of closed, smooth, simply connected 6-manifolds with torsion free cohomology. We also describe some examples. In Section 5, we make some remarks on the relation between Calabi-Yau and mirror symmetry.

{\bf{Notation.}}  Throughout the paper, we will work over the field of complex numbers.

\section{Preliminaries}\label{sezio prelim}

In this section, we recall the main definitions of Calabi-Yau and toric Fano manifolds.

\subsection{Calabi-Yau Manifolds}\label{sezio calbiyau}

Let $X$ be a  complex manifold and denote by $T_X$ its holomorphic tangent bundle. $X$ is a Calabi-Yau manifold of dimension $n$ if it is a 
 projective manifold with trivial canonical bundle  
and without holomophic $p$-forms, i.e., $K_X := \Omega^n_X \cong \Oh_X$ 
and $H^0(X, \Omega^p_X)=0$ for $p$ in between $0$ and $n$.

If $X$ has dimension 3, we have   $\Omega ^3_X \cong \Oh_X$. Since $\Omega ^1_X$ is isomorphic to the dual of  $T_X $, this implies that 
$\Omega ^2_X \cong T_X$,  and, by duality,  that $H^0(X, \Omega^2_X)= H^2( X,\Oh_X) = H^1(X, \Oh_X)=H^0(X, \Omega^1_X)=0$. 

Denoting by  $h^{i,j}(X)= \operatorname{dim}_{\C} H^j(X, \Omega^i_X)$ and fixing $ h^{1,1}(X)=a$ and $ h^{1,2}(X)=b$, we can collect
the information above in  the so-called  \emph{Hodge diamond}:
 
\[
 \xymatrix@R=8pt@C=6pt{
   &   &   & 1   \\
   &   & 0 &   & 0 \\
   & 0 &   &  {a} &   & 0 \\
1  &   &  {b} &   & {b} &   & 1 \\
   & 0 &   &  {a} &   & 0 \\
   &   & 0 &   & 0 \\
   &   &   & 1.
}
\]

\bigskip

This shows that the topological type of Calabi-Yau manifold  is not uniquely determined for dimension 3. If $X_1$ and $X_2$ 
are two Calabi-Yau threefolds
with different $a$ and $b$ then they cannot be homeomorphic. 

Next, consider the case where the Hodge numbers $(a,b)$ are the same. 
Let $X_1$ and $X_2$ be two Calabi-Yau threefolds, 
with the same Hodge numbers $a$ and $b$, i.e.,  $h^{1,1}(X_i)=a$ and $h^{1,2}(X_i)=b$, for $i=1,2$. Then,
we wonder if  $X_1$ and $ X_2$  are diffeomorphic. 
Indeed, if the Calabi-Yau threefolds   are diffeomorphic, then they have the 
same numbers Hodge numbers $(a,b)$ but nothing is known about the other implication.

To understand the problem, we  focus our attention on the class of    Calabi-Yau manifolds embedded   in toric Fano manifolds.

\subsection{Toric Fano manifolds}\label{sezio toric fano}
Let $F$ be a smooth toric Fano variety of dimension $n$. 
A Fano manifold is a projective manifold $F$, whose anticanonical line bundle $-K_F := \wedge^n T_F$ is ample.

If $F$ is also  a toric variety, then  $- K_F$ is very ample (so base point free) \cite[Lemma 2.20]{oda}.
Therefore,  by Bertini's Theorem \cite[Corollary III.10.9]{hart},  the generic section of $\Oh_F(-K_F)$ gives
 a smooth connected hypersurface $X \subseteq F$, such that $X \in |-K_F|$. 
 Thus, $X$ is a smooth Calabi-Yau variety  \cite[Proposition 11.2.10]{coxlittle}. This shows that once we have a toric Fano manifold, then there always  exists  a submanifold of codimension 1 that is a Calabi-Yau manifold.

In particular, if $F$ has dimension $4$, $X$ is a smooth complex Calabi-Yau threefold.  This is actually one of the most fruitful way to construct examples of Calabi-Yau threefolds 
\cite{batykreu}.

\begin{example}
The projective space $\mathbb{P}^4$ is a smooth toric Fano manifold of dimension $4$. The general quintic hypersurface 
is a smooth   Calabi-Yau threefold. This is the most extensively studied example of Calabi-Yau threefold. In this case, it can be proved that 
$a=1$ and $b=101$.

 \end{example}

In Proposition \ref{prop. forget fano liscio}, we 
investigate  the infinitesimal deformations of  smooth complex Calabi-Yau threefolds, which are obtained as anticanonical hypersurfaces in 
a Fano manifold.

\section{Abstract vs Embedded Deformations}

In this section, we review the notion of  deformations of a submanifold $X$ in a manifold $F$. In particular, we are
interested in the infinitesimal deformations of $X$ as an  abstract manifold and in the embedded deformations of $X$ in $F$.
For more details, we refer the reader to \cite[Sections 2.4 and 3.2]{Sernesi}.

\smallskip

We denote by  $\Def_X$   the  functor of infinitesimal deformations of $X$ as an abstract variety, i.e.,
$$
\Def_X: \Art \to \Set,
$$
where $\Def_X(A)$ is the set of isomorphism classes of commutative
diagrams:
\begin{center}
$\xymatrix{X \ar[r]^i\ar[d] & X_A\ar[d]^{p_A} \\
           \Spec(\K) \ar[r] & \Spec(A),  \\ }$
\end{center}
where $i$ is a closed embedding and $p_A$ is a flat morphism.

\begin{remark}\label{remark eresma}

In our setting, $X$ is smooth, then all the fibers of $p_A$ are diffeomorphic by  Ereshman's Theorem.
 Thus,   an infinitesimal deformation of $X$ is nothing else than a deformation of the complex  structure over the same differentiable structure of $X$.
In particular,   if $X_1$ and $ X_2$ are deformation equivalent then 
they are diffeomorphic, i.e., $X_1\sim_{def} X_2 \Longrightarrow  X_1\cong_{dif}X_2$. 
The converse is not true: $X_1\cong_{dif}X_2 \not \Longrightarrow X_1\sim_{def} X_2 $. 
There are examples of diffeomorphic Calabi-Yau threefolds that are not deformation equivalent  \cite{gross, ruan}.

\end{remark}

\begin{remark}

If $X$ is a Calabi-Yau manifold, then Bogomolov-Tian-Todorov Theorem implies that the functor $\Def_X$ is smooth. This property implies that the moduli space is smooth at the point corresponding to $X$.
 
\end{remark}

We denote by $H^F_X$ the functor of infinitesimal embedded deformations of $X$ in $F$, i.e.,
 $$
H^F_X: \Art \to \Set,
$$
where $H^F_X(A)$ is the set of commutative
diagram:
\begin{center}
$\xymatrix{X \ar[r]^i\ar[d] & X_A\ar[d]^{p_A} \subset F \times \Spec(A) \\
           \Spec(\K) \ar[r] & \Spec(A), &  \\ }$
\end{center}
where $i$ is a closed embedding, $X_A \subset F \times \Spec(A)$ and $p_A$ is a flat morphism induced by the projection
$F \times \Spec(A) \to \Spec(A)$.

In particular, the following forgetful morphism of functors is well defined:
\[
 \phi: H^F_X \to \Def_X;
\]
moreover, the image of an infinitesimal deformation of $X$ in $F$ is  the isomorphism class of the deformation
of $X$, viewed as an abstract deformation  \cite[Section 3.2.3]{Sernesi}.

 \begin{example}

Let $n \geq 4$ and   $X$  be the general anticanonical
hypersurface in $\mathbb{P}^n$. Note that  $\mathbb{P}^n$ is a  smooth toric Fano variety and   $X$  a smooth  Calabi-Yau manifold.

For every Calabi-Yau manifold $X$ in a projective space $\mathbb{P}^n$, the embedded deformations of $X$ in $\mathbb{P}^n$ are unobstructed
\cite[Corollary A.2]{huy}.

Therefore, the functor $ \Def_X$ and the morphism $\phi$ are both smooth and this implies that $H^{\mathbb{P}^n}_X$ is also smooth
\cite[Corollary 2.3.7]{Sernesi}.

In particular, this implies that all the infinitesimal deformations of  the general anticanonical
hypersurface $X$   as an abstract variety
are obtained as embedded deformations of $X$ inside $\mathbb{P}^n$.  The following proposition shows that a similar property   is true
for any smooth toric Fano variety and not only for $\mathbb{P}^n$.

\end{example}

\begin{proposition}\label{prop. forget fano liscio}
Let $F$ be a smooth toric Fano variety with $\operatorname{dim}  F >3$ and denote by $X$
a smooth connected hypersurface in $F$ such that $X \in |-K_F|$. Then,
the forgetful morphism
\[
 \phi: H^F_X \to \Def_X
\]
is smooth.

\end{proposition}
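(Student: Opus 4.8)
The plan is to reduce the smoothness of $\phi$ to a single cohomological vanishing on $X$, namely $H^1(X,T_F|_X)=0$, and then to extract that vanishing from the Fano hypothesis on $F$ together with the bound $\operatorname{dim} F>3$.

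The first step is the usual comparison of the two functors through the normal sheaf. Since $X$ is a smooth hypersurface in the smooth variety $F$, we have the short exact sequence of locally free sheaves on $X$
\[
0\to T_X\to T_F|_X\to N_{X/F}\to 0,
\]
where $N_{X/F}\cong\Oh_X(-K_F)$. As recalled in \cite[Section 3.2.3]{Sernesi}, the forgetful morphism $\phi$ is compatible with the tangent--obstruction theories attached to this sequence: its differential is the coboundary $H^0(X,N_{X/F})\to H^1(X,T_X)$, and its obstruction map is compatible with the coboundary $H^1(X,N_{X/F})\to H^2(X,T_X)$. By the standard smoothness criterion for a morphism of functors of Artin rings (see \cite[Section 2.3]{Sernesi}), $\phi$ is smooth as soon as its differential is surjective and its obstruction map is injective; from the long exact cohomology sequence of the displayed sequence, both properties follow once $H^1(X,T_F|_X)=0$. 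Equivalently, one may first note that $H^F_X$ itself is smooth, because its obstruction space $H^1(X,N_{X/F})=H^1(X,\Oh_X(-K_F))$ vanishes by Demazure vanishing for the ample divisor $-K_F$ together with $H^2(F,\Oh_F)=0$; since $\Def_X$ is smooth by Bogomolov--Tian--Todorov, smoothness of $\phi$ reduces again to surjectivity of its differential, i.e. to $H^1(X,T_F|_X)=0$.

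So everything comes down to proving $H^1(X,T_F|_X)=0$, and this is where $X\in|-K_F|$ enters. First I would twist the ideal sheaf sequence $0\to\Oh_F(-X)\to\Oh_F\to\Oh_X\to 0$ by $T_F$, obtaining
\[
0\to T_F\otimes\Oh_F(-X)\to T_F\to T_F|_X\to 0 .
\]
Now $\Oh_F(X)\cong\Oh_F(-K_F)$, and for any smooth variety one has $\Omega^{n-1}_F\cong T_F\otimes\Omega^n_F$ with $n=\operatorname{dim} F$; hence $T_F\otimes\Oh_F(-X)\cong T_F\otimes\Oh_F(K_F)\cong\Omega^{n-1}_F$. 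The associated long exact sequence contains
\[
H^1(F,T_F)\longrightarrow H^1(X,T_F|_X)\longrightarrow H^2(F,\Omega^{n-1}_F),
\]
so it suffices to show the two outer groups vanish. Since $F$ is a smooth projective toric variety, $H^q(F,\Omega^p_F)=0$ whenever $p\neq q$; because $\operatorname{dim} F>3$ we have $n-1\geq 3>2$, so $H^2(F,\Omega^{n-1}_F)=0$. This is precisely where the dimension hypothesis is used: for $n=3$ one meets $H^2(F,\Omega^2_F)\cong H^4(F,\C)\neq 0$, and in fact the statement fails for K3 hypersurfaces in toric Fano threefolds.

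The remaining ingredient, $H^1(F,T_F)=0$, is the heart of the proof and the only point genuinely requiring that $F$ be Fano rather than merely smooth and projective: smooth toric Fano varieties are infinitesimally rigid. I would either quote this, or derive it from the generalized Euler sequence $0\to\Oh_F^{\oplus r}\to\bigoplus_i\Oh_F(D_i)\to T_F\to 0$, where the $D_i$ are the torus-invariant prime divisors and $r=\operatorname{rank}\operatorname{Pic}(F)$: using $H^1(F,\Oh_F)=H^2(F,\Oh_F)=0$, valid on any complete toric variety, this reduces $H^1(F,T_F)=0$ to the vanishings $H^1(F,\Oh_F(D_i))=0$, which can be checked combinatorially using that $\sum_i D_i=-K_F$ is ample. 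Granting this, $H^1(X,T_F|_X)=0$, hence $\phi$ has surjective differential and injective obstruction map, and is therefore smooth. I expect the vanishing $H^1(F,T_F)=0$ to be the only real obstacle; the rest is formal manipulation of the exact sequences above and standard deformation-theoretic bookkeeping. Note finally that the argument uses nothing about $X$ beyond $\Oh_F(X)\cong\Oh_F(-K_F)$: the Calabi--Yau condition is what makes $\Def_X$ smooth, which combined with this proposition shows that $H^F_X$ is smooth as well.
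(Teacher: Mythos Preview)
Your argument is correct and follows the same skeleton as the paper: reduce smoothness of $\phi$ to $H^1(X,T_F|_X)=0$ via the normal bundle sequence and the standard smoothness criterion, then use the ideal sheaf sequence twisted by $T_F$ to reduce to $H^1(F,T_F)=0$ and $H^2(F,T_F\otimes\Oh_F(K_F))=0$, with the first vanishing coming from rigidity of smooth toric Fano varieties. The one genuine difference is how you dispose of $H^2(F,T_F\otimes\Oh_F(K_F))$. The paper proves this in a separate lemma by tensoring the generalized Euler sequence with $\Oh_F(K_F)$, applying Serre duality, and then invoking a further exact sequence for $\Omega^1_F$ to kill the remaining terms. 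Your route is shorter: you observe directly that $T_F\otimes\Oh_F(K_F)\cong\Omega^{n-1}_F$ and then use the toric Hodge vanishing $H^q(F,\Omega^p_F)=0$ for $p\neq q$, which immediately gives $H^2(F,\Omega^{n-1}_F)=0$ once $n>3$. This is a cleaner way to see the same vanishing and makes the role of the hypothesis $\dim F>3$ completely transparent, as you note. The paper's argument has the minor advantage of being self-contained from the Euler sequence, but yours is the more efficient one.
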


\begin{proof}
The varieties $F$ and $X$ are both smooth, so we have the exact sequence
\[
 0 \to T_X \to T_{F|X} \to N_{X/F} \to 0
\]
that induces the following exact sequence in cohomology, namely:
\[
\cdots \! \to H^0(X, N_{X/F}) \stackrel{\alpha}{\to}\!
 H^1(X, T_X) \to\! H^1(X, T_{F|X}) \to H^1(X, N_{X/F}) \stackrel{\beta}{\to}\!
 H^2(X, T_X) \to \! \cdots .
\]
The morphism $\alpha$ is the map induced by $\phi$ on the tangent spaces and $\beta$ is an
obstruction map for $\phi$ \cite[Proposition 3.2.9]{Sernesi}.
Applying the standard smoothness criterion \cite[Proposition 2.3.6]{Sernesi}, it
is enough to prove that $\alpha$ is surjective  and $\beta$ is injective; in particular, it  suffices to prove
that  $ H^1(X, T_{F|X})=0$.

For this purpose, consider the exact sequence
\[
 0 \to \Oh_F(-X) \to \Oh_F \to \Oh_X \to 0
\]
and tensor it with $T_F$, thus yielding
\[
 0 \to T_F \otimes \Oh_F(-X) \to T_F \to T_{F|X} \to 0
\]
and the induced exact sequence in cohomology, namely:
\[
\cdots \to
 H^1(F, T_F) \to H^1(F, T_{F|X}) \to H^2(F, T_F \otimes \Oh_F(-X)) \to
 H^2(F, T_F)  \to \cdots .
\]
If $F$ is a smooth toric Fano variety, then $H^i(F,T_F)=0$, for all $i>0$ \cite[Proposition 4.2]{brion}.
Since $\Oh_F(-X) \cong \Oh_F(K_F)$, we are reduced to prove that $H^2(F, T_F \otimes \Oh_F(K_F))= 0$.
This follows from Lemma \ref{lemma H2 0}.

\end{proof}

\begin{lemma}\label{lemma H2 0}
 Let $F$ be a smooth toric Fano variety with $\operatorname{dim}  F >3$. Then the following holds:
$$
H^2(F, T_F \otimes \Oh_F(K_F))=0.
$$
\end{lemma}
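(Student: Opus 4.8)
The plan is to identify the sheaf $T_F\otimes\Oh_F(K_F)$ with a sheaf of holomorphic differential forms and then quote the vanishing of the off-diagonal Hodge numbers of a smooth projective toric variety. Set $n=\dim F$, so that $n\geq 4$ by hypothesis, and recall that a Fano variety is projective, hence $F$ is a smooth projective toric variety.

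First I would use that, on any $n$-dimensional smooth variety, the wedge product is a perfect pairing $\Omega^{n-1}_F\otimes\Omega^1_F\to\Omega^n_F=\Oh_F(K_F)$, which yields a canonical isomorphism
\[
\Omega^{n-1}_F\;\cong\;\HOM(\Omega^1_F,\Oh_F(K_F))\;\cong\;(\Omega^1_F)^\vee\otimes\Oh_F(K_F)\;=\;T_F\otimes\Oh_F(K_F).
\]
Consequently $H^2(F,T_F\otimes\Oh_F(K_F))\cong H^2(F,\Omega^{n-1}_F)$, a space whose dimension is by definition the Hodge number $h^{n-1,2}(F)$.

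Next I would invoke the classical fact that a smooth complete toric variety has $h^{p,q}=0$ whenever $p\neq q$ (see \cite{coxlittle}; this is Danilov's description of the Hodge structure of complete toric varieties, and concretely follows because the cohomology is spanned by classes of torus-invariant divisors, all of type $(p,p)$, so all odd Betti numbers vanish and $H^{2k}$ is of pure Hodge type $(k,k)$). Since $n>3$ we have $n-1\geq 3>2$, whence $h^{n-1,2}(F)=0$, proving the lemma. Equivalently, one may argue by Serre duality: $H^2(F,T_F\otimes\Oh_F(K_F))$ is dual to $H^{n-2}(F,\Omega^1_F)$, of dimension $h^{1,n-2}(F)$, which vanishes because $n-2\geq 2>1$.

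I do not expect a genuine obstacle here; the only points requiring care are keeping track of the twist in the identification $T_F\otimes\Oh_F(K_F)\cong\Omega^{n-1}_F$ and pinning down a clean reference for Hodge vanishing on toric varieties. It is worth recording that the hypothesis $\dim F>3$ is sharp: for $n=3$ the same computation gives $H^2(F,\Omega^2_F)$, whose dimension is $h^{2,2}(F)=b_4(F)>0$, so the conclusion genuinely fails in dimension three — which is exactly why Proposition \ref{prop. forget fano liscio} is stated for $\dim F>3$.
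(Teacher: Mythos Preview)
Your argument is correct. The identification $T_F\otimes\Oh_F(K_F)\cong\Omega^{n-1}_F$ via the perfect pairing $\Omega^1_F\otimes\Omega^{n-1}_F\to\Omega^n_F$ is standard on any smooth $n$-fold, and the vanishing $H^q(F,\Omega^p_F)=0$ for $p\neq q$ on a smooth complete toric variety is exactly the content of \cite[Theorem~9.3.2]{coxlittle}. Since $n-1\neq 2$ when $n>3$, the lemma follows at once.

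Your route is genuinely different from, and considerably shorter than, the paper's. The paper tensors the generalized Euler sequence $0\to Pic(F)\otimes_{\Z}\Oh_F\to\bigoplus_i\Oh_F(D_i)\to T_F\to 0$ by $\Oh_F(K_F)$, applies Kodaira vanishing (using ampleness of $-K_F$) together with Serre duality to reduce to $\bigoplus_i H^{n-3}(D_i,\Oh_{D_i})=0$, and then kills that group via the sequence $0\to\Omega^1_F\to M\otimes_{\Z}\Oh_F\to\bigoplus_i\Oh_{D_i}\to 0$, ultimately invoking the same reference \cite[Theorem~9.3.2]{coxlittle} for $H^{n-2}(F,\Omega^1_F)=0$. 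Thus both arguments rest on the identical Hodge vanishing; you simply apply it directly to $\Omega^{n-1}_F$ instead of winding through two exact sequences. A pleasant by-product of your approach is that it makes transparent that the Fano hypothesis is not actually needed for this lemma: smooth, complete, and toric suffice. Your remark on sharpness at $n=3$ is also correct and worth keeping.
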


\begin{proof}

As for projective space, there exists a generalized Euler exact sequence for the tangent bundle of toric varieties
\cite[Theorem 8.1.6]{coxlittle}:
\[
 0 \to Pic(F) \otimes_{\Z} \Oh_F    \to  \oplus_i \Oh_F (D_i) \to T_F \to 0,
\]
where $K_F= - \sum_i D_i$ \cite[Theorem 8.2.3]{coxlittle}.  We note also that  $ Pic(F) \otimes_{\Z} \Oh_F \cong \Oh_F^{rank}$, 
where $rank$ denotes the $rank$ of $Pic(F)$.
By tensoring with $ \Oh_F(K_F)$, we obtain
 \[
 0 \to Pic(F) \otimes_{\Z} \Oh_F(K_F)    \to  \oplus_i \Oh_F (D_i+K_F) \to T_F \otimes \Oh_F(K_F) \to 0
\]
 and so
\[
\cdots \to
 H^2(F, Pic(F) \otimes_{\Z} \Oh_F(K_F)) \to
\]
\[
\cdots \to \oplus_i H^2(F, \Oh_F (D_i+K_F)) \to H^2(F,  T_F \otimes \Oh_F(K_F))\!
 \to H^3(F, Pic(F) \otimes_{\Z} \Oh_F(K_F)) \to \cdots
\]

Since $-K_F$ is ample, by Kodaira
vanishing Theorem, $H^j(F, \Oh_F)=0, j>0$.  Moreover, by Serre duality $H^j(F, \Oh_F(K_F))=0, j\neq \operatorname{dim}F$.
Therefore, if $\operatorname{dim}   F >3$, then $ H^2(F, Pic(F) \otimes_{\Z} \Oh_F(K_F))= H^3(F, Pic(F) \otimes_{\Z} \Oh_F(K_F))=0$ and
\[
 \oplus_i H^2(F, \Oh_F (D_i+K_F)) \cong H^2(F,  T_F \otimes \Oh_F(K_F)).
\]
By Serre duality,  $ H^2(F, \Oh_F (D_i+K_F))\cong H^{\operatorname{dim} F-2}(F, \Oh_F (-D_i))^\nu$, for all $i$.

Using the following exact sequence
\[
 0 \to \Oh_F (-D_i)    \to    \Oh_F  \to   \Oh_{D_i}   \to 0 
\]
and the fact that  $H^j(F, \Oh_F)=0,$ for $j>0$, we conclude that  $H^{\operatorname{dim} F-2}(F, \Oh_F (-D_i)) \cong H^{\operatorname{dim} F-3}(D_i, \Oh_{D_i} )$, for all $i$.

Therefore, we are left to prove that $\oplus_i H^{\operatorname{dim} F-3}(D_i, \Oh_{D_i} ) =0$.

Consider the following exact sequence on a toric variety \cite[Theorem 8.1.4]{coxlittle}
\[
 0 \to \Omega^1_F    \to  M \otimes_Z \Oh_F  \to \oplus_i \Oh_{D_i}   \to 0,
\]
where $M$ is a lattice related to the toric structure; here we only need that $M \otimes_Z \Oh_F \cong \Oh_F ^r$, for some $r \in\N$.

This induces
\[
\cdots  \to  H^{\operatorname{dim} F-3}(F,M \otimes_Z \Oh_F)  \to \oplus_i H^{\operatorname{dim} F-3}(D_i, \Oh_{D_i} ) 
\to   H^{\operatorname{dim}F-2}(F, \Omega^1_F )   \to \cdots.
\]
Since $H^j(F, \Oh_F)=0,$ for $j>0$ and $\operatorname{dim} F>3$, we have
$H^{\operatorname{dim} F-3}(F,M \otimes_Z \Oh_F) = H^{\operatorname{dim} F-2}(F, \Omega^1_F ) =0$
\cite[Theorem 9.3.2]{coxlittle}. This implies $\oplus_i H^{\operatorname{dim} F-3}(D_i, \Oh_{D_i} ) =0$.

\end{proof}

\begin{remark} \label{remark defo equivalent fano}
Proposition \ref{prop. forget fano liscio} shows that  all the infinitesimal deformations of $X$ as an abstract variety are obtained as
infinitesimal deformations of $X$ inside the smooth toric Fano manifold $F$.
Moreover, since every deformation of a Calabi-Yau manifold is smooth (Bogomolov-Tian-Todorov Theorem),
we conclude that  the deformations of $X$ inside $F$ are also smooth.

\end{remark}

\section{Diffeomorphic Three-dimensional Calabi-Yau varieties}

In this section, we focus on the diffeomorphism class of three dimensional Calabi-Yau manifolds.

In 1966,   C.T.C. Wall described the invariants that determine the diffeomorphism
type of closed, smooth, simply connected 6-manifolds with torsion free  cohomology.

\begin{theorem}\label{theorem wall} \cite{wall}
Diffeomorphism classes of simply-connected, spin, oriented, closed 6-manifolds $X$
with torsion-free cohomology
correspond bijectively to isomorphism classes of systems of invariants consisting of

\begin{enumerate}
\item  free Abelian groups $H^2 (X,\Z)$ and $H^3(X,\Z)$,
\item  a symmetric trilinear form $\mu \! :\! H^2 (X,\Z)^{\otimes 3} \to H^6 (X,\Z) \cong \Z$ defined by
$\mu(x,y,z)\! := x \cup y \cup z$,
\item a linear map $p_1: H^2 (X,\Z)  \to H^6 (X,\Z) \cong \Z$, defined by
$p_1(x):= p_1(X)  \cup x$, where $p_1(X) \in H^4 (X,\Z)$ is the first Pontrjagin class of $X$, satisfying,
\end{enumerate}
 for any $x,y \in H ^2 (X,\Z)$, the following conditions

\[
 \mu(x,x,y)+ \mu(x,y,y) \equiv 0 \pmod{2} \qquad \qquad  4\mu(x,x,x) -p_1(x) \equiv 0 \pmod{24}.
\]

 \end{theorem}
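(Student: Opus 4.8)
The plan is to reduce Wall's theorem to the classification of certain algebraic/combinatorial data over a closed, simply connected, spin $6$-manifold $X$ with torsion-free cohomology, and then establish the bijection in two directions: every such $X$ produces a system of invariants as in (1)--(3) satisfying the two congruences, and conversely every abstract such system is realized by a unique (up to diffeomorphism) manifold. Since this is Wall's 1966 theorem, I would not attempt the full surgery-theoretic proof but sketch its architecture. First I would record the cohomological preliminaries: Poincar\'e duality gives $H^6(X,\Z)\cong\Z$ (with a chosen orientation), $H^5(X,\Z)\cong H^1(X,\Z)=0$ and $H^4(X,\Z)\cong H^2(X,\Z)$ by the universal coefficient theorem together with simple connectivity and torsion-freeness; thus the only nontrivial cohomology lives in degrees $0,2,3,4,6$, and $H^3(X,\Z)$ carries a skew-symmetric unimodular pairing, hence has even rank. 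This explains why the data in (1) suffices to pin down the additive structure.

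\emph{Necessity of the congruences.} The relation $\mu(x,x,y)+\mu(x,y,y)\equiv 0\pmod 2$ I would deduce from the fact that $w_2(X)=0$ (spin), so that for every $x\in H^2(X,\Z)$ the Wu-type identity $x\cup x\equiv x\cup w_2 = 0$ holds after reduction mod $2$; expanding $\mu((x+y),(x+y),\cdot)$ and using that the cup-square of $x+y$ reduces to zero mod $2$ yields the stated relation. The second relation, $4\mu(x,x,x)-p_1(x)\equiv 0\pmod{24}$, I would obtain from the Hirzebruch--Riemann--Roch / integrality of the $\widehat{A}$-genus and the index theorem applied to suitable characteristic submanifolds, or more classically from the fact that on a spin $6$-manifold the relevant Pontryagin number is divisible by $24$; concretely one plugs $x$ into $p_1$ and compares with the self-intersection triple product via the signature theorem on the submanifold dual to $x$. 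These divisibility statements are exactly the numerical obstructions forcing the two congruences.

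\emph{Realization and uniqueness.} This is the heart of the matter and the main obstacle. Given an abstract system $(G_2,G_3,\mu,p_1)$ satisfying the congruences, one builds $X$ by handlebody theory: first construct a $2$-skeleton realizing $H^2=G_2$ (a connected sum of copies of $S^2\times S^4$ and $\C P^3$-type pieces gives the flexibility to prescribe $\mu$ and $p_1$), then attach $3$-handles to kill/create $H^3=G_3$ while preserving the lower data; the spin condition and the two congruences are precisely what guarantee that the attaching maps can be chosen so the resulting closed manifold has the prescribed $p_1$ and $\mu$, and that the bounding handlebody can be capped off. For uniqueness one shows that two manifolds with isomorphic systems of invariants are cobordant by an $h$-cobordism: using surgery below the middle dimension one makes a cobordism highly connected, the obstruction to completing the surgery in the middle dimension is measured exactly by the displayed invariants, and an isomorphism of invariants kills it; the $h$-cobordism theorem (valid since $X$ is simply connected) then gives a diffeomorphism. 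I expect the delicate point to be the middle-dimensional surgery step controlling $H^3$: the skew pairing there behaves like a symplectic form, and one must track how the $2$-dimensional and $4$-dimensional data interact with $3$-handle attachments without introducing torsion, which is where torsion-freeness and the spin hypothesis are used decisively.

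**Remark on scope.** Since the statement is quoted from \cite{wall}, in the paper one would most naturally cite it rather than reprove it; the sketch above indicates why each hypothesis (simple connectivity, spin, torsion-free cohomology, orientation) appears and why exactly these invariants, subject to exactly these two congruences, form a complete system.
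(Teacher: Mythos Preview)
The paper does not prove this statement at all: Theorem~\ref{theorem wall} is simply quoted from \cite{wall} and used as a black box, with no argument given. Your closing remark already anticipates this, and indeed the appropriate ``proof'' here is the citation.

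Your sketch goes well beyond what the paper does, so the comparison is asymmetric: you are outlining Wall's original surgery-theoretic argument, while the paper merely records the result. The overall architecture you describe---extract the invariants from Poincar\'e duality and torsion-freeness, verify the two congruences from characteristic-class identities, then realize an arbitrary admissible system by handle constructions and prove uniqueness via the $h$-cobordism theorem---is faithful to how Wall's proof actually proceeds. One caution: your derivation of the first congruence is not quite right as written. The claim ``$x\cup x \equiv x\cup w_2$ mod $2$'' is not a Wu-formula identity for $x\in H^2$; the Wu relation in this dimension says that $\operatorname{Sq}^2$ on $H^4$ is cup product with $v_2=w_2$, and one has to feed in classes of the form $x\cup y\in H^4$ and use $\operatorname{Sq}^2(xy)=x^2y+xy^2$ mod $2$ to obtain $\mu(x,x,y)+\mu(x,y,y)\equiv w_2\cup x\cup y \equiv 0$ in the spin case. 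With that correction the necessity argument is sound; the realization and uniqueness steps are, as you say, the substantial part and are correctly attributed to surgery below and in the middle dimension together with the $h$-cobordism theorem.
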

The symbol  $\cup$ denotes the cup product of differential forms and
the isomorphism $H^6 (X,\Z) \cong \Z$ above is given by pairing a cohomology class with the fundamental class of $X $ with natural orientation.

\medskip

Let $X$ be a Calabi-Yau threefold. In  \cite{Kanazawa}, the authors investigate the interplay between the trilinear form $\mu$ and the
Chern classes $c_2 (X)$ and $c_3 (X)$ of $X$, providing the following numerical relation.

\begin{theorem}\label{theorem kanaza}\cite{Kanazawa}
Let $(X,H)$ be a very ample polarized Calabi-Yau threefold, i.e., $x=H$ is a very ample divisor on $X$. Then the following inequalities holds:
\begin{equation}\label{equa kanaza}
 -36 \mu(x,x,x) -80 \leq \frac{c_3(X)}{2}= h^{1,1}(X) - h^{2,2} \leq 6 \mu(x,x,x) +40.
\end{equation}
\end{theorem}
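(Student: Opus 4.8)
The plan is to bound $c_3(X)$ — equivalently the Euler characteristic $\chi(X) = 2(h^{1,1}(X) - h^{1,2}(X))$ — in terms of the self-intersection number $d := \mu(x,x,x) = H^3$ of the very ample polarization, by realizing $X$ as a hypersurface (more precisely, by comparing $X$ with the complete intersection data coming from $|H|$) and applying Riemann--Roch together with the positivity constraints forced by very ampleness. First I would recall that, since $H$ is very ample, the linear system $|H|$ gives a projective embedding $X \hookrightarrow \proj^N$ with $N = h^0(X,\Oh_X(H)) - 1$; by Kodaira vanishing and Riemann--Roch on the Calabi--Yau threefold $X$ one computes $h^0(X,\Oh_X(H)) = \tfrac{1}{6} H^3 + \tfrac{1}{12} H \cdot c_2(X)$, so the degree $d = H^3$ and the ``curve invariant'' $H \cdot c_2(X)$ together control the embedding. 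The key classical input is Miyaoka's (or the relevant Bogomolov-type) inequality bounding $c_2(X) \cdot H$ from above and below in terms of $H^3$; combined with the second Wall congruence $4\mu(x,x,x) - p_1(x) \equiv 0 \pmod{24}$ (note $p_1(X) = -2c_2(X)$ for a Calabi--Yau threefold, since $c_1 = 0$), this pins $H\cdot c_2(X)$ to an interval of the shape $[\,*d - *,\, *d + *\,]$.

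The heart of the argument is then to estimate $c_3(X)$ in terms of $d$ and $H \cdot c_2(X)$. Here I would bound the Hodge numbers: $h^{1,1}(X)$ is bounded above using the embedding (e.g.\ via the fact that a very ample $H$ forces the Picard number and the Kähler cone to sit inside controlled regions, or via Kawamata--Viehweg-type arguments on the blown-up geometry), while $h^{2,1}(X)$ — the dimension of the deformation space — is bounded using that $X$ is a hypersurface-type object: one estimates $H^1(X,T_X) = H^1(X,\Omega^2_X)$ through an exact sequence relating $\Omega^1_X$, $\Omega^1_{\proj^N}|_X$ and the conormal bundle, and controls the relevant cohomology by the degree $d$. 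Putting the two one-sided estimates together yields the two-sided bound on $\tfrac{c_3(X)}{2} = h^{1,1}(X) - h^{1,2}(X)$; matching the universal constants $-80$, $40$ and the slopes $-36$, $6$ against $d$ is then a bookkeeping exercise, using the congruences from Theorem \ref{theorem wall} to absorb the additive constants and to replace $\tfrac{c_3}{2} = h^{1,1} - h^{1,2}$ by $h^{1,1} - h^{2,2}$ as in the statement (these agree by Hodge symmetry $h^{2,2} = h^{1,1}$, so the second equality in \eqref{equa kanaza} is immediate and the content is entirely the outer inequalities).

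I expect the main obstacle to be obtaining the \emph{sharp} coefficients $-36$ and $6$ (and the constants $-80$, $40$): the naive Riemann--Roch plus Miyaoka estimates give inequalities of the right \emph{shape} but with worse constants, and extracting the stated numbers requires the full strength of the genus/degree estimates for the curves cut out by $|H|$ on $X$ (adjunction on a smooth member $S \in |H|$, which is a surface of general type or K3-type depending on $H^2|_S$, and the Bogomolov--Miyaoka--Yau inequality applied to $S$), together with a careful tracking of the two Wall congruences to control the error terms mod $24$. A secondary subtlety is the case analysis according to whether a general $S \in |H|$ and a general curve $C \in |H|_S|$ are ``small'' (so that extremal configurations occur) — these boundary cases are exactly what produce the additive constants $-80$ and $40$, and handling them is where the bulk of the technical work lies.
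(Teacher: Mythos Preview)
The paper does not contain a proof of this theorem: it is quoted verbatim from \cite{Kanazawa} (note the citation attached to the theorem environment) and is used as a black box to derive the subsequent corollary. There is therefore no ``paper's own proof'' to compare your proposal against.

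That said, one genuine slip in your write-up is worth flagging. You attempt to reconcile the displayed equality $\frac{c_3(X)}{2} = h^{1,1}(X) - h^{2,2}$ with the correct formula $\frac{c_3(X)}{2} = h^{1,1}(X) - h^{1,2}(X)$ by invoking ``Hodge symmetry $h^{2,2} = h^{1,1}$''. But that symmetry gives $h^{1,1} - h^{2,2} = 0$, not $h^{1,1} - h^{1,2}$; the appearance of $h^{2,2}$ in the statement is simply a typographical error in the paper (confirmed a few lines below, where the paper writes $\chi(X) = 2h^{1,1}(X) - 2h^{1,2}$), and you should not try to justify it. Beyond this, your outline is a plausible high-level sketch of the strategy in \cite{Kanazawa} --- embed via $|H|$, cut down by hyperplane sections, and bound the Hodge numbers through Riemann--Roch and positivity on the resulting surfaces and curves --- but as written it is too schematic to count as a proof: the actual extraction of the constants $-36$, $6$, $-80$, $40$ in \cite{Kanazawa} hinges on specific double-point/projection formulas for the image of $X$ in low-dimensional projective spaces, and none of that machinery appears in your proposal.
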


Note, that if  $X$ is a Calabi-Yau threefold, then $p_1(X)= -2c_2 (X) \in H^4 (X,\Z)$ and
\[
 \int_X c_3 (X)= \chi(X) = \sum_{i=0}^6 \operatorname{dim} H^i(X, \R)= 2h^{1,1}(X) - 2h^{1,2}.
\]

\begin{remark}
 By Wall's Theorem, if $X$ is simply-connected, spin, oriented, closed 6-manifolds
with torsion-free cohomology, then the diffeomorphism class is determined by  the free Abelian groups $H^2 (X,\Z)$ and $H^3(X,\Z)$,
and the form $\mu$ and $p_1$. For any data we have a diffeormphism class. If $X$ is Calabi Yau, then  $\mu$ and $p_1$ have to satisfy the
numerical conditions of Equation (\ref{equa kanaza}). Note that, having $\mu$ and $p_1$ on $X$ that satisfy all the numerical conditions,
it does not imply that $X$ is a Calabi Yau.

\end{remark}

In particular, let  $X_1$ and $X_2$ be two simply connected  Calabi-Yau threefolds with torsion-free cohomology
and the same Hodge numbers  $h^{1,1}(X)=a$ and $h^{1,2}(X)=b$. To be diffeomorphic, they should have the same $\mu$ and $p_1$,
that satisfy the numerical conditions.

\begin{corollary}
Let $X$ be a Calabi-Yau threefold, with torsion-free cohomology
and  $h^{1,1}(X)=1$ and $h^{1,2}(X)=h^{2,1}(X)=b$, for some $b \in \N$; hence, we have $H ^2 (X,\Z) \cong \Z$ and $H ^3 (X,\Z) \cong \Z^{2+b}$. Fix a generator $H \in H ^2 (X,\Z)$ and set $\mu(H,H,H)= m \in \Z$. Then the following holds:
\[
 m \geq \frac{b-81}{36}.
 \]
\end{corollary}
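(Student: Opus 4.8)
The plan is to read the inequality off directly from Theorem~\ref{theorem kanaza}, once every quantity occurring there has been identified for a Calabi--Yau threefold with $h^{1,1}(X)=1$. First I would set up the cohomological data. Since $X$ is a Calabi--Yau threefold we have $H^1(X,\Oh_X)=H^2(X,\Oh_X)=0$, so the exponential sequence identifies $\operatorname{Pic}(X)$ with $H^2(X,\Z)$; by the torsion-freeness hypothesis and $h^{1,1}(X)=1$ this is free of rank one, and I take $H$ to be its ample generator, so that $m=\mu(H,H,H)=H^3>0$. Likewise $H^3(X,\Z)$ is free of the rank $b_3(X)$ prescribed by the Hodge diamond. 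The single delicate point here is that Theorem~\ref{theorem kanaza} is stated for a \emph{very ample} polarization, whereas a priori $H$ is only known to be ample; I return to this at the end.

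Next I would pin down $c_3(X)/2$ and substitute. By the identity recalled just after Theorem~\ref{theorem kanaza},
\[
\int_X c_3(X)=\chi(X)=2h^{1,1}(X)-2h^{1,2}(X)=2-2b,
\]
so $c_3(X)/2 = 1-b$. Applying the left-hand inequality of \eqref{equa kanaza} to the class $x=H$ gives $-36\,\mu(H,H,H)-80\le c_3(X)/2$, i.e. $-36m-80\le 1-b$; rearranging, $b-81\le 36m$, that is $m\ge \dfrac{b-81}{36}$, which is the assertion.

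The step I expect to be the real obstacle is the very ampleness of $H$. If $H$ is itself very ample --- which is the natural setting, since $h^{1,1}(X)=1$ leaves essentially one polarization to work with --- the computation above applies verbatim. If one only knows $H$ ample, one instead applies Theorem~\ref{theorem kanaza} to $x=kH$ for the least $k$ with $kH$ very ample, obtaining $36k^3m\ge b-81$; since $m\ge 1$ this already forces $m\ge (b-81)/36$ whenever $b\le 117$, while for larger $b$ one must either assume the polarization very ample or invoke an effective very-ampleness result for such line bundles on Calabi--Yau threefolds. Apart from this point, the proof is a single substitution into Theorem~\ref{theorem kanaza}.
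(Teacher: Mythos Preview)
Your proof is correct and follows essentially the same route as the paper: both substitute $c_3(X)/2 = 1-b$ into the left-hand inequality of Theorem~\ref{theorem kanaza} with $x=H$ and rearrange. The paper additionally records the Wall congruence $2m+c\equiv 0\pmod{12}$ along the way (without using it for the bound) and, unlike you, does not comment on the very-ampleness hypothesis.
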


\proof Set $p_1(X)= -2c_2 (X) \in H^4 (X,\Z)$; so there exists $c \in \Z$ such that $c_2 (X)=c H^*$. Therefore, the linear form $p_1$ reduces to
\[p_1: H^2 (X,\Z)  \to H^6 (X,\Z) \cong \Z \qquad
p_1(xH):= -2 c_2(X) \cup xH= -2cx H^* \cup H= -2cx.
\]
The numerical  constraints of Theorem \ref{theorem wall} reduce to
\[
mx^2 y+ mxy^2 \equiv 0 \pmod{2} \qquad \qquad    2mx^3 +cx \equiv 0 \pmod{12}
\]
for any $x,y \in \Z$.
The former condition is always verified while the latter congruence is equivalent to  $2m +c\equiv 0 \pmod{12}$.

As for the numerical restriction of Theorem \ref{theorem kanaza}, Equation \eqref{equa kanaza} reduces to
\[
 -36 \mu(x,x,x) -80 \leq 1-b \leq 6 \mu(x,x,x) +40.
\]
and so
\[
 -36 m -80 \leq 1-b \leq 6 m +40.
\]
In particular,
\[
b \leq  81 +36 m   \qquad -39 -6m \leq b;
 \]
\[
 m \geq \frac{b-81}{36}  \qquad m \geq \frac{-39-b}{6}.
 \]
Since $b$ is positive, they reduce to
\[
 m \geq \frac{b-81}{36}.
 \]
\endproof

\begin{example}
If $m=5$, then $b \leq 261$. In \cite{kap}, Appendix 1, there are three examples that satisfy this condition, 
namely $b=51, 101, 156$. For $b=101$ we obtain the general quintic threefold in $\mathbb{P}^4$. Projective models for 
the remaining two are still mysterious, as indicated by the question mark in the table in \cite{kap}.
\end{example}

 \section{Some remarks on the moduli space of Calabi-Yau manifolds and mirror symmetry}\label{sezio mirror}

Let $X$ be a Calabi-Yau variety. Denote by $H$ a primitive ample divisor. As in \cite{chank kim}, let $M_{m,c}$
be the space of polarized varieties $(X,H)$ such that $H^3=m$ and $c_2(X)H=c$ for integers $m$ and $c$. Little
is known on the geometric structure of $M_{m,c}$. Some information can be found in \cite{chank kim}.

Here we make the following remarks. Let $X$ be a general quintic in ${\mathbb P}^4$. A hyperplane section on $X$
is a (very) ample divisor $H$ such that $H^3=5$. On the Calabi-Yau manifold $X$ the Grothendieck-Riemann-Roch
Theorem reads as follows:
$$
\chi(H)=\frac{H^3}{6} + \frac{1}{12}c_2(X)H.
$$

Since $H$ is a divisor on $X$, we have
$$
\chi({\mathcal O}_X) + \chi({\mathcal O_H}(H))=\chi(H).
$$

The first term on the left-hand side is zero because $X$ is a Calabi-Yau; the second term can be computed via Noether's formula, namely:
$$
\chi({\mathcal O_H}(H))=\frac{K_H^2+c_2(H)}{12}.
$$

A linear section of a quintic is a quintic surface in three-dimensional projective space. As well known,
the second Betti number is 53, so the Euler characteristic is 55. Therefore, we get
$$
\chi({\mathcal O_H}(H))=5.
$$

Hence, we get
$$
5=\frac{H^3}{6} + \frac{1}{12}c_2(X)H,
$$
which yields $c_2(X)H=50$.

This means that the pair $(X,H)$ belongs to the space $M_{5,50}$, where $X$ is a quintic in ${\mathbb P}^4$ and $H$ is a
hyperplane section.

The Hodge numbers of $X$ are given by $(a,b)=(1,101)$. The Hodge numbers of a mirror manifold $X'$ are given by $(101,1)$.

\begin{proposition} There exists a primitive ample divisor $D$ on $X'$ such that $(X',D)$ belongs to $M_{5,50}$.
\end{proposition}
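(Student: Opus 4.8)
The plan is to mirror the computation just carried out for the quintic, applied to a concrete projective model $X'$ of the mirror. The mirror of the general quintic in $\mathbb{P}^4$ is the Greene–Plesser orbifold: take the Fermat quintic $\{x_0^5+\cdots+x_4^5=0\}\subset\mathbb{P}^4$, quotient by the group $G\cong(\mathbb{Z}/5)^3$ of diagonal symmetries acting with trivial product of weights, and resolve the quotient singularities crepantly. The resulting $X'$ is a Calabi-Yau threefold with Hodge numbers $(a,b)=(101,1)$. On this model one has a natural polarization: the pullback $D$ of the hyperplane class of $\mathbb{P}^4/G$ (equivalently, the image of $\mathcal{O}(1)$ on the Fermat quintic descended to the quotient). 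After possibly replacing $D$ by a primitive multiple, I claim $D^3=5$ and $c_2(X')\cdot D=50$, so that $(X',D)\in M_{5,50}$ by definition.

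First I would fix the model and the divisor precisely, and verify $D$ is primitive and ample on $X'$: ampleness follows because $D$ is pulled back from the ample hyperplane class on the finite quotient of projective space, and up to the crepant resolution the relevant intersection theory is that of the quotient. Second, I would compute $D^3$. Since $D$ comes from $H/G$ on the quotient of the Fermat quintic $Q$ by the order-$125$ group $G$, and $H^3=5$ on $Q$, one expects $D^3 = H^3/|G| = 5/125$ on the quotient — which is not an integer — so the honest statement is that the primitive integral generator of the relevant sublattice has self-intersection $5$; equivalently, one uses that $(X',D)$ is itself mirror-dual and the mirror map identifies the polarized invariants. In practice the cleanest route is exactly the Grothendieck–Riemann–Roch / Noether argument from the quintic paragraph run backwards: knowing $D^3$ and that a smooth member of $|D|$ is again a quintic-type surface with $\chi(\mathcal{O}_S(D))=5$, the identity $5=\frac{D^3}{6}+\frac{1}{12}c_2(X')\cdot D$ pins down $c_2(X')\cdot D$ once $D^3=5$ is established.

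Third, to get $D^3=5$ directly, I would use that mirror symmetry for this pair is realized by the Batyrev–Borisov (here Greene–Plesser) construction on a dual pair of reflexive polytopes, and that the hyperplane polarizations correspond under the duality; the triple self-intersection of the mirror polarization on the resolved quotient is computed combinatorially from the dual polytope and equals $5$. Alternatively — and this is the step I expect to be the main obstacle — one verifies by hand that the crepant resolution contributes nothing to $D^3$ (the exceptional divisors are pulled back from curves/points not meeting a generic member of $|D|$ appropriately) and that the orbifold quotient multiplies a suitable integral class correctly, so that the primitive ample class $D$ indeed satisfies $D^3=5$. Granting $D^3=5$, the computation of $c_2(X')\cdot D=50$ is then identical, word for word, to the quintic computation: a smooth surface $S\in|D|$ is a quintic surface in $\mathbb{P}^3$ with second Betti number $53$ and Euler characteristic $55$, Noether's formula gives $\chi(\mathcal{O}_S(D))=5$, and $\chi(\mathcal{O}_{X'})=0$ together with Grothendieck–Riemann–Roch forces $c_2(X')\cdot D=50$. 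Hence $(X',D)\in M_{5,50}$, completing the proof.
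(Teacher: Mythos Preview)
Your overall strategy --- choose an ample $D$ on the mirror, establish $D^3=5$, then run Grothendieck--Riemann--Roch together with Noether's formula on a surface in $|D|$ to obtain $c_2(X')\cdot D=50$ --- is exactly the paper's argument. The genuine gap is the step $D^3=5$, which you yourself flag: in the Greene--Plesser presentation (crepant resolution of the $(\Z/5)^3$-quotient of the Fermat quintic) the naive pullback of the hyperplane class has cube $5/|G|=1/25$, not $5$, and you never actually exhibit a primitive ample class of cube $5$ on $X'$. The appeals to ``the primitive integral generator has self-intersection $5$'', to mirror duality of polarized invariants, and to Batyrev polytope combinatorics are all left as bare assertions. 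For the same reason, your claim that a smooth $S\in|D|$ is a quintic surface in $\mathbb{P}^3$ with $b_2=53$ is not justified in this model: a hyperplane section of the quotient is not obviously such a surface.

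The paper sidesteps both issues by using a different projective realization of the same mirror, taken from \cite{BvGT}: there $X'$ is a crepant resolution $\pi\colon X'\to Y$ of a \emph{singular quintic hypersurface} $Y\subset\mathbb{P}^4$ (no group quotient in sight). Taking $D=\pi^*\bigl(\mathcal{O}_{\mathbb{P}^4}(1)|_Y\bigr)$ one gets $D^3=\deg Y=5$ immediately, with nothing to divide by; and a member of $|D|$ chosen to avoid the blown-up locus is literally a smooth quintic surface in $\mathbb{P}^3$, so $\chi(\mathcal{O}_D(D))=5$ by the identical Noether computation you wrote down. Once you switch to this model, your argument is complete and coincides with the paper's.
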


\begin{proof}

In fact, as recalled in \cite{BvGT}, a mirror of $X$ can be found as a crepant resolution of a singular
quintic in ${\mathbb P}^4$. Denote by $D$ the pull-back of the hyperplane divisor on projective space.
Clearly, $D^3=5$. Now, we need to compute
$$
 \chi(D)=\frac{D^3}{6} + \frac{1}{12}c_2(X')D.
$$

Like before, we have
$$
\chi({\mathcal O_D}(D))=\frac{K_D^2+c_2(D)}{12}.
$$

Notice that
$$
K_D^2=D^2D=5.
$$

As mentioned before, the divisor $D$ is the pull-back of the hyperplane divisor.
We can take a member of it that does not intersect the blown up locus. Thus, $c_2(D)$ is again
the Euler characteristic of a quintic surface in three-dimensional projective space. Therefore, the claim follows.

\end{proof}

\begin{example}
For the general quintic threefold $X$ in $\mathbb{P}^4$, we have $a=1$, $b=101$,  $m=5$ and $c=50$, that
satisfy the previous conditions. Therefore,
$X$ lies in the space $M_{5,50}=\{(X, H)\mid \ H^3=5, c_2 (X) \cdot H = 50\}$ introduced in \cite{chank kim}.  The mirror $\tilde{X}$ of $X$
is a smooth Calabi-Yau
threefolds with $a=101$, $b=1$, $m=5$ and $c=50$. So it lies in the same space  $M_{5,50}$ but the Hodge numbers are exchanged: this implies
that $X$ and $\tilde{X}$ are neither diffeomorphic nor deformation equivalent!
\end{example}

\footnotesize

\end{document}